\newcommand{\Path }{}
\newcommand{\dimM}{n}
\newcommand{\dimG}{d}
\newcommand{\LieG}{G}
\newcommand{\LieA}{\mathfrak{g}}
\newcommand{\Trans}[1]{\left( l_{#1}\right)}
\newcommand{\metric}{\tau}
\newcommand{\IsoA}{\mathcal{I}}
\newcommand{\IsoB}{\Psi}
\begin{document}
\title{\vspace{-2cm}Lie group valued Koopman eigenfunctions}
\author{Suddhasattwa Das\footnotemark[1]}
\footnotetext[1]{Department of Mathematics and Statistics, Texas Tech University, Texas, USA}
\date{\today}
\maketitle

\begin{abstract} Every continuous-time flow on a topological space has associated to it a Koopman operator, which operates by time-shifts on various spaces of functions, such as $C^r$, $L^2$, or functions of bounded variation. An eigenfunction of the vector field (and thus for the Koopman operator) can be viewed as an $S^1$-valued function, which also plays the role of a semiconjugacy to a rigid rotation on $S^1$. This notion of Koopman eigenfunctions will be generalized to Lie-group valued eigenfunctions, and we will discuss the dynamical aspects of these functions. One of the tools that will be developed to aid the discussion, is a concept of exterior derivative for Lie group valued functions, which generalizes the notion of the differential $df$ of a real valued function $f$. The extended notion of Koopman eigenfunctions utilizes a geometric property of usual eigenfunctions. We show that the generalization in a geometric sense can be used to reveal fundamental properties of usual Koopman eigenfunctions, such as their behavior under time-rescaling, and as submersions.
\\\emph{AMSC code} : 22E15, 22E60, 37C85, 37C15
\\\emph{Keywords} : Lie group valued functions, exterior derivative, Koopman operator
\end{abstract}

\section{Introduction} \label{sec:intro}

A dynamical system is given by a map $f:M\to M$ on a manifold $M$, or a flow $\Phi^t : M\to M$. Dynamical systems theory addresses various questions about these maps / flows, such as the topology of invariant sets, nature of orbits and invariant measures. The Koopman operator framework [see Section~\ref{sec:dynamics}] studies the dynamics induced on observables / measurements, and provide an operator theoretic reformulation of many questions in Dynamical systems theory. 

One of the key objects of studying the Koopman operator are its eigenfunctions for various choices of function spaces. Koopman eigenfunctions can be directly interpreted as rotational dynamics factored into the nonlinear dynamics, see for example \eqref{eqn:def:koop_eigen}, or their use in \cite{Cai_equicont_2020, DasJim2017_SuperC,  DSSY2017_QQ, LevnajicMezic2010}. While the most common choice of space is $L^2(\mu)$, the space of square-integrable functions with respect to a dynamically invariant measure $\mu$, there has been several generalizations of Koopman eigenfunctions in various directions. For example, Klus et. al. introduced a tensorial reformulation of the Koopman eigen-equation \cite{KlusEtAl_tensor_2018} for more robust study of nonlinear systems; Koopman eigenfunctions can be interpreted as measures of coherence  \citep[e.g][]{LopesEtAl_2008, FroylandPadberg09}; as stationary quantum modes \cite{FibichKlein2011}; as Sobolev functions measuring the rate of dissipation in mixing systems \cite{FengIyer2019}; and as the modulating component in signals from \emph{quasiperiodically driven systems} \citep[see][]{DasEtAl_Alfaya_ACC_2021}.

The goal of this paper is to study yet another aspect of Koopman eigenfunctions, the partitions they induce. The partitions induced by Koopman eigenfunctions have been used for ergodic approximations in some systems \citep[see][]{Murray_partition_2004}. We shall look at the geometry of these partitions. One of our main results  is \\

\emph{ \textbf{Submersion theorem.} [Theorem~\ref{thm:submersion}] A collection of $m$ dynamically independent, $C^1$ Koopman eigenfunctions lead to a submersion $\pi:= (z_1, \ldots, z_m)$ into the $m$-dimensional torus $\mathbb{T}^m$.}

\ 

The notion of Koopman eigenfunctions and dynamic independence will be made precise in Section~\ref{sec:dynamics}. The above result is a combination of differential, geometric, and dynamical properties. To aid the proof, we develop the language of \emph{Lie-group valued Koopman eigenfunctions}, one of the main contributions of this paper. An important ingredient of this analysis is an extension of the concept of differential of a function, from real valued functions to Lie group valued functions, in Section~\ref{sec:Lie}. Most of our analysis is global and coordinate free. We revisit our definitions in Section~\ref{sec:local_lift} with a coordinate based approach to reestablish the similarities of our notions with the usual notion of differential of a real valued functions.

\section{Differential for Lie group valued functions} \label{sec:Lie}

Given a $C^1$ function $f:M\to\real$ on a manifold $M$, the differential of $f$ is a 1-form $df$, i.e., an $\real$-linear, $C^0(M)$ valued function on the space $\Gamma^1 (M)$ of $C^1$ vector fields. The action of $df$ on a vector field $V$ coincides with the action of $V$ on $f$, namely, $V(f)$. Moreover, by exploiting the triviality of the tangent bundle of the real line $\real$, one can also express this as the push forward of the vector field under $f$, namely, $f_* V\in T\real$. One of the objectives in this paper is to extend the notion of the differential to Lie group valued functions $z:M\to \LieG$, where $\LieG$ is some Lie group with Lie algebra $\LieA$, as is usual notation in Lie theorey \citep[e.g.][]{Gorbatsevich1994, GallierQuaintance2020}. In the extended definition, $dz$ will be an $\real$-linear, $C^0(M;\LieA)$-valued function on $\Gamma^1 M$, where $C^0(M;\LieA)$ denotes the set of all $C^1$-smooth mappings from $M$ into $\LieA$.

Lie group valued functions occur in the setting of dynamical systems as \emph{Koopman eigenfunctions}, which will be defined later. Koopman eigenfunctions are $\cmplx$-valued functions which evolve with a periodicity under the dynamics, and a collection of $d$ Koopman eigenfunctions can be viewed as a mapping into the $d$-dimensional torus $\TorusD{d}$. The second objective of this paper is introducing a generalized notion of a Lie-group valued Koopman eigenfunction $z$ in terms of its differential $dz$. It will be shown that analogous to the $\cmplx$-valued case, a $\LieG$-valued Koopman eigenfunction factors the dynamics into a flow on $\LieG$. The following will be the standing notations and assumptions.

\begin{Assumption}\label{A:1}
$M$ is a $C^1$ $\dimM$-dimensional manifold. $\LieG$ is a $\dimG$-dimensional Lie group, with Lie algebra $\LieA$.
\end{Assumption}

The identity element of $\LieG$ will be denoted as $e$. In a Lie group, for every $g\in G$, the left multiplication by $g$ which will be denoted as $\Trans{g}$, is a $C^\infty$ diffeomorphism of $\LieG$. One has,
\[\Trans{g}:\LieG\to \LieG; \quad \Trans{g}:h\mapsto gh; \quad \Trans{g}_*|_h : T_h\LieG \to T_{gh}\LieG; \quad \forall h\in \LieG.\]
Let $z:M\to G$ be a $C^1$ function. We will now proceed to define $dz$ by extending the definition of the exterior derivative $df$ of an $\real$-valued function $f$. Recall that $df$ can be defined as $\proj_2\circ f_*$, with $\proj_2:T\real\to\real$ being the projection onto the second coordinate. This is shown in the commuting diagram below.
\begin{equation}\label{eqn:def:df}
\begin{tikzcd}[column sep = large]
M \arrow{d}{f} &TM \arrow{l}{} \arrow{d}{f_*} \arrow[bend left = 30]{dr}{df} &\ \\
\real &T\real \cong \real\times\real \arrow[swap]{l}{\proj_1} \arrow{r}{\proj_2} &\real 
\end{tikzcd}
\end{equation}
One can easily extend this definition for an $\real^d$ valued function $f:M\to\real^d$, by computing the differential of each of the components. The projection $\proj_2$, which is an important tool in the definition \eqref{eqn:def:df}, will also be well defined in the case of Lie groups too, since Lie groups have trivial tangent bundles. More explicitly, we will use the trivialization $\LieG\times\LieA \cong T\LieG$, with the bundle isomorphism given by 
\begin{equation}\label{eqn:Lie_bundle}
\IsoA:\LieG\times\LieA \to T\LieG \quad := \quad (g,v) \mapsto \Trans{g}_* |_e  v.
\end{equation}
We can now define $dz:\Gamma^1(M)\to C^0(M;\LieA)$ as $dz = \proj_2 \circ z_*$, as shown by the dashed arrow in the commutative diagram below. 
\begin{equation}\label{eqn:def:dz}
\begin{tikzcd}[column sep = large]
M \arrow{d}{z} &TM \arrow{l}{} \arrow{d}{z_*} \arrow[dashed]{r}{dz} & \LieA \\
G &TG \arrow{r}{\IsoA}[swap]{\cong}  \arrow[swap]{l}{\proj_1} &G\times\LieA \arrow{u}{\proj_2}  
\end{tikzcd}.
\end{equation}

Theorem \ref{thm:dz_basic} below gives another equivalent definition for $dz$, and states properties of $dz$ which are similar to those for the usual exterior derivative.

\begin{theorem}\label{thm:dz_basic}
Let Assumption \ref{A:1} hold, and $z:M\to G$ be a $C^1$ function. Then 
\begin{enumerate}[(i)]
\item $dz$ defined as in \eqref{eqn:def:dz} is a $\LieA\cong\real^\dimG$ valued 1-form.
\item At every point $x\in M$, the linear map $dz(x):T_x M\to \LieA$ is explicitly given by
\begin{equation}\label{eqn:def2:dz}
dz(x)(v) = \Trans{z(x)^{-1}}_*|_{z(x)} z_*|_x v, \quad \forall v\in T_x M.
\end{equation}
\item Naturality : Let $\phi:M'\to M$ be a smooth map between manifolds, and $z':M'\to\LieG$ be the map $z'=z\circ\phi$. Then $dz'$ = $dz\circ \phi_*$.
\item Invariance under translation : For every $g\in G$, $d(gz) = d(z)$.
\end{enumerate}
\end{theorem}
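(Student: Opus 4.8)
The plan is to prove parts (i) and (ii) simultaneously by unwinding the definition \eqref{eqn:def:dz}, and then to obtain (iii) and (iv) as formal consequences of the chain rule together with the group law for left translations. Everything hinges on one preliminary computation: an explicit formula for the inverse trivialization $\IsoA^{-1}$. Since $\IsoA(g,v) = \Trans{g}_* v$ with $v \in \LieA = T_e\LieG$, and since the identity $\Trans{g^{-1}}\circ\Trans{g} = \mathrm{id}_{\LieG}$ forces $\Trans{g^{-1}}_*|_{g}\circ\Trans{g}_*|_e = \mathrm{id}_\LieA$, applying $\Trans{g^{-1}}_*|_g$ to any $w = \Trans{g}_* v \in T_g\LieG$ recovers $v$. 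Hence $\proj_2\circ\IsoA^{-1}|_g = \Trans{g^{-1}}_*|_g$. Substituting $g = z(x)$ and $w = z_*|_x v$ into $dz = \proj_2\circ\IsoA^{-1}\circ z_*$ then yields exactly the formula \eqref{eqn:def2:dz}, which is part (ii).

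Part (i) follows from this explicit form. For each fixed $x$, the map $dz(x)$ is the composition of the linear maps $z_*|_x$ and $\Trans{z(x)^{-1}}_*|_{z(x)}$, hence is $\real$-linear on the fiber $T_x M$ and $\LieA\cong\real^\dimG$ valued. To upgrade this pointwise linearity to the statement that $dz$ is a genuine $C^0$ $\LieA$-valued $1$-form—equivalently that $dz(V)\in C^0(M;\LieA)$ for each $V\in\Gamma^1 M$—I would invoke the smoothness of the bundle isomorphism $\IsoA$ in \eqref{eqn:Lie_bundle} together with the fact that $z$ is $C^1$, so that $x\mapsto z_*|_x$ is continuous and the whole composition depends continuously on $x$.

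For (iii), the chain rule gives $z'_* = z_*\circ\phi_*$, and feeding this into \eqref{eqn:def2:dz} yields $dz'(x')(v) = \Trans{z(\phi(x'))^{-1}}_*\, z_*|_{\phi(x')}\,\phi_*|_{x'} v = dz(\phi(x'))(\phi_*|_{x'} v)$, which is $(dz\circ\phi_*)(x')(v)$; the only thing to verify is that the base points agree, which holds because $z'(x') = z(\phi(x'))$. For (iv), writing $gz = \Trans{g}\circ z$ gives $(gz)_*|_x = \Trans{g}_*|_{z(x)}\, z_*|_x$, while the inversion rule $(gz(x))^{-1} = z(x)^{-1}g^{-1}$ and the relation $\Trans{ab} = \Trans{a}\circ\Trans{b}$ give $\Trans{(gz(x))^{-1}}_* = \Trans{z(x)^{-1}}_*\,\Trans{g^{-1}}_*$. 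Plugging both into \eqref{eqn:def2:dz} and collapsing $\Trans{g^{-1}}_*|_{gz(x)}\circ\Trans{g}_*|_{z(x)} = (\Trans{g^{-1}}\circ\Trans{g})_*|_{z(x)} = \mathrm{id}$ leaves $\Trans{z(x)^{-1}}_*\, z_*|_x = dz(x)$, so $d(gz) = dz$.

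I do not expect a serious obstacle, since the argument is essentially bookkeeping with pushforwards and the group structure. The one place demanding genuine care is tracking base points and composition order of the differentials $\Trans{\cdot}_*$ on the correct fibers—in particular confirming that $\Trans{g^{-1}}_*|_{g}\circ\Trans{g}_*|_e$ is the identity on $\LieA$ and not some conjugate—and, for the $1$-form claim in (i), stating the continuity precisely rather than settling for fiberwise linearity.
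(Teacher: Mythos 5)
Your proposal is correct and follows essentially the same route as the paper's own proof: inverting the trivialization $\IsoA$ to get the explicit formula \eqref{eqn:def2:dz}, reading off linearity of $dz$ from the composition of bundle maps, and deriving (iii) and (iv) from the chain rule together with $\Trans{ab}=\Trans{a}\circ\Trans{b}$ and the cancellation $\Trans{g^{-1}}_*\circ\Trans{g}_*=\mathrm{id}$. If anything, you are more careful than the paper in tracking the base points of the pushforwards in (iv) and in stating the continuity needed for the $1$-form claim in (i).
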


Theorem~\ref{thm:dz_basic} is proved in Section~\ref{sec:proof:dz_basic}.
]black{Henceforth, given a vector field $V$ we shall adopt a reuse of notation done for $\real$-valued functions, and denote by $dz(V)$ the map 
\[ x \mapsto dz(V)(x) = dz(V(x)) \] }

\paragraph{Remark} Equation \eqref{eqn:def:dz} defines the tensor $dz$ globally in a coordinate-free manner. Given any $r$-tensor $\alpha$ and any point $x$ on $M$, $\alpha$ restricts to $T_x^r M$ as a linear map $\alpha(x)$. This local definition of $df$ is provided by \eqref{eqn:def2:dz}. The bundle map definition in \eqref{eqn:def:dz} is the same as \eqref{eqn:def:df}, except stated for a general Lie group $\LieG$ instead of $\real$. When $\LieG=\real$, then $\Trans{z(x)^{-1}}_*$ is the identity, so \eqref{eqn:def2:dz} also becomes the usual definition for the differential of $\real$-valued functions. In Claims (iii)-(iv), we continue this analogy by showing that many properties of $df$ carry over to $dz$ as well. 

\paragraph{Gradient of Lie-group valued functions.} Having established a definition of $d\theta$ that is analogous to the differential of an $\real^d$ valued function, one can define the notion of gradient. For this purpose, let $\metric$ be a Riemannian metric on $M$. Then the gradient $\nabla_{\metric} z$  of $z$ in this metric is the dual of the $\LieA$-valued 1-form $dz$. In other words $\nabla_{\metric} z$  will be a collection of $d$ vectors defined through dual action.
\begin{equation}\label{eqn:def:gradient}
\langle \nabla_{\metric} z, W \rangle_\metric := dz(W); \quad \forall W\in\Gamma^1 M.
\end{equation}
Here, $\langle \nabla_{\metric} z, W \rangle_\metric$ is the collection of the inner products of $W$ with the $d$ components of $dz$. Let $0_\LieA$ denote the $0$ element of $\LieA$. Then by the \emph{kernel} of the gradient $\nabla_{\metric} z$ at $x$, we will mean the set of vectors $w\in T_x M$ such that $\langle \nabla_{\metric} z(x), w \rangle_\metric$   = $0_\LieA$. This will be denoted $\ker \nabla_{\metric} z$ .

\begin{theorem}\label{thm:gradient}
Let Assumption \ref{A:1} hold and $\metric$ be a Riemannian metric on $M$. Let $z:M\to\LieG$ be a $C^1$ map, and  $g\in \LieG$ be a regular value of $z$. Thus $N$:= $z^{-1}(g)$ is a codimension-$d$ submanifold. Then at every point $x\in N$, the sub-bundle $TN$ coincides with the kernel of $\nabla_{\metric} z$ (in the $\metric$ - metric).
\end{theorem}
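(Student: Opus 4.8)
The plan is to establish, at each point $x\in N$, the chain of equalities
$T_x N = \ker z_*|_x = \ker dz(x) = \ker \nabla_\metric z(x)$,
working from the metric description of the kernel inward to the ordinary pushforward, and then invoking the regular value (preimage) theorem. The first reduction is immediate from the definition of the gradient. By the defining relation \eqref{eqn:def:gradient}, a vector $w\in T_x M$ satisfies $\langle \nabla_\metric z(x), w\rangle_\metric = 0_\LieA$ if and only if $dz(x)(w) = 0_\LieA$; hence $\ker \nabla_\metric z(x) = \ker dz(x)$. In particular this kernel does not depend on the choice of metric $\metric$, which only enters when one identifies the gradient vectors themselves.

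Next I would reduce $\ker dz(x)$ to the kernel of the classical pushforward. By the local formula in Theorem \ref{thm:dz_basic}(ii), $dz(x)(v) = \Trans{z(x)^{-1}}_*|_{z(x)}\, z_*|_x v$. Since left multiplication $\Trans{z(x)^{-1}}$ is a diffeomorphism of $\LieG$, its differential $\Trans{z(x)^{-1}}_*|_{z(x)} : T_{z(x)}\LieG \to T_e\LieG \cong \LieA$ is a linear isomorphism, and composing with an isomorphism leaves the kernel unchanged. Therefore $dz(x)(v)=0_\LieA$ exactly when $z_*|_x v = 0$, i.e. $\ker dz(x) = \ker z_*|_x$.

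It then remains to identify $\ker z_*|_x$ with $T_x N$. The inclusion $T_x N \subseteq \ker z_*|_x$ I would obtain by differentiating the identity $z\equiv g$ along an arbitrary $C^1$ curve lying in $N$: the velocity of such a curve at $x$ is annihilated by $z_*|_x$, and every element of $T_x N$ is realized as such a velocity. For the reverse inclusion I would use a dimension count rather than a set-theoretic argument. Because $g$ is a regular value, $z_*|_x : T_x M \to T_g\LieG$ is surjective, so $\dim \ker z_*|_x = \dimM - \dimG$; and since $N = z^{-1}(g)$ has codimension $\dimG$, one has $\dim T_x N = \dimM - \dimG$ as well. Equal dimensions together with the inclusion force $T_x N = \ker z_*|_x$. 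Assembling the three identifications gives the claim.

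I do not expect a genuine obstacle here: the statement is essentially a repackaging of the classical characterization of tangent spaces to regular level sets, and the only input beyond the regular value theorem is the isomorphism step of the second paragraph, which rests entirely on left translations being diffeomorphisms. The single point worth flagging explicitly is that the metric $\metric$ is irrelevant to the kernel; the conclusion holds verbatim for \emph{any} Riemannian metric on $M$, and the choice of $\metric$ affects only which vectors represent $\nabla_\metric z$, not where the pairing vanishes.
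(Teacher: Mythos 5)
Your proof is correct and follows essentially the same route as the paper's: the forward inclusion $T_xN\subseteq\ker\nabla_\metric z$ by differentiating $z\equiv g$ along $N$, the identification of $\ker dz(x)$ with $\ker z_*|_x$ via the left-translation isomorphism, and the reverse inclusion by a dimension count using the regular value hypothesis. If anything, your count $\dim\ker z_*|_x = \dimM-\dimG = \dim T_xN$ is stated more carefully than the paper's, which loosely calls both spaces ``$d$-dimensional'' when they are in fact of dimension $\dimM-\dimG$.
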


Theorem \ref{thm:gradient} is proved in Section~\ref{sec:proof:gradient}. A Riemannian metric tensor is a useful tool to connect tangent and cotangent spaces. Although it is not the unique tool for this purpose, it provides in addition an intuitive notion of normal direct to a submanifold, as stated in Theorem \ref{thm:gradient}. This notion of a metric tensor along with the gradient is later used in Lemma \ref{lem:Koop_eigen_subm}  to prove that every Koopman eigenfunction with nonzero eigenfrequency is a submersion. 

We have so far extended the geometric notions of differential and gradient of $\real$-valued functions to Lie group valued functions, and the results state that they retain certain analogous properties. These extended notions will now be used in the context of $C^1$ flows on dynamical systems.

\section{Dynamical systems} \label{sec:dynamics}

We now assume the following dynamics occurring on the manifold $M$.

\begin{Assumption}\label{A:2}
$V$ is a $C^1$ non-vanishing vector field on $M$, inducing a flow $\Phi^t : M\to M$. 
\end{Assumption}

\paragraph{The Koopman operator.} Koopman operators \cite{DasGiannakis_delay_2019,DasJim2017_SuperC} act on observables by composition with the flow map, i.e., by time shifts. There are various choices for the space of observables, such as $L^2$, Lipschitz, and functions of bounded variation. Here, we will restrict our attention to $C^1(M)$ : given an observable $f \in C^1(M)$ and time $t\in\real$, $U^t:C^1(M) \to C^1(M)$ is the operator defined as
\[(U^tf):x\mapsto f\left(\Phi^t x\right).\]
$U^t$ is called the Koopman operator associated with the flow, at time $t$. In general, if $\Phi^t$ is a $C^k$ flow for some $k\geq 0$, then $U^t$ maps the space $C^r(M)$ into itself, for every $0\leq r\leq k$. The vector field $V$ acts as a differentiation operation $V:C^1(M) \to C^0(M)$ , defined as
\begin{equation*}\label{eqn:def_gen_flow}
V f:=\lim_{t\to 0} \frac{ 1 }{ t } \left(U^t f - f\right), \quad f \in C^1(M).
\end{equation*}
The family of operators $U^t$ can also be defined on the space of $L^2$ functions with respect to an invariant measure $\mu$ (see for example \cite{DasGiannakis_delay_2019}), in which case, $U^t$ is a 1-parameter unitary group. Such a group has a generator $\hat{V}$ acting on some dense subspace of $L^2(\mu)$, and the action of $\hat{V}$ on $C^1(M)\cap L^2(\mu)$ coincides with that of $V$. The operator theoretic study of a dynamical system is the study of this operator $U^t$ instead of actual structures on the phase space $M$. The primary advantage is that, however nonlinear the underlying system $\Phi^t : M\to M$ is,  $U^t$ is always a bounded linear operator. Thus its dynamics is that of a linear system. On the other hand, instead of the finite dimensional phase space $M$, one has to consider dynamics in an (usually) infinite dimensional space $L^2(\mu)$ .

\paragraph{Koopman eigenfunctions.} A $C^1$ function $z:M\to\cmplx$ is said to be a Koopman eigenfunction with eigenfrequency $\omega$ if for every $x\in M$, every $t\in\real$, $(U^t z)(x) = e^{i\omega t} z(x)$. The operators $U^t$ and $V$ share the same $C^1$ eigenfunctions : 
\begin{equation}\label{eqn:def:koop_eigen}
U^tz=\exp(i\omega t)z \quad \Leftrightarrow \quad Vz=i\omega z.
\end{equation}
Koopman eigenfunctions factor the dynamics onto a rotation on $S^1$ with frequency $\omega$, as shown in the left diagram in \eqref{def:Koop_eigen}. Similarly, $d$ Koopman eigenfunctions $(z_1,\ldots,z_d)$ factor the dynamics into a rotation on $\TorusD{d}$.
\begin{equation}\label{def:Koop_eigen}
\begin{tikzcd}[row sep = small]
M \arrow[swap]{d}{z\ } \arrow{r}{\Phi^t_V} &M \arrow{d}{\ z} \\
S^1 \arrow{r}{R_\omega^t} &S^1
\end{tikzcd}; \quad
R_\omega^t(\theta) \mapsto \theta+t\omega \bmod S^1; \quad
\begin{tikzcd}[row sep = small]
M \arrow[swap]{d}{(z_1,\ldots,z_d)\ } \arrow{r}{\Phi^t_V} &M \arrow{d}{\ (z_1,\ldots,z_d)} \\
\TorusD{d} \arrow{r}{R_\omega^t} &\TorusD{d}
\end{tikzcd}; \quad
R_\omega^t(\theta) \mapsto \theta+t\omega \bmod \TorusD{d}; \quad
\end{equation}
The commutation in \eqref{def:Koop_eigen} holds regardless of whether the $z_1,\ldots, z_d$ are independent or are repeated. We later examine the consequences of these eigenfunctions being independent in Theorem~\ref{thm:submersion}.

In summary Koopman eigenfunctions represent a low dimensional dynamics embedded in the flow, called \emph{quasiperiodic} dynamics. Let $\Disc$ denote the $L^2(\mu)$ closure of the span of the eigenfunctions of $V$, and $\Disc^\bot$ denote its orthogonal complement. This leads to an invariant orthogonal splitting :
\begin{equation} \label{eqn:L2_decomp}
L^2(X,\mu)=\Disc\oplus\Disc^\bot.
\end{equation}
The subspace $\Disc$ always contains the constant functions, which correspond to eigenfrequency $0$. If $d\geq 1$, there is at least one non-trivial Koopman eigenfunction. $\Disc$ is called the \emph{discrete / quasiperiodic } component \citep[see][]{DasGiannakis_RKHS_2018, DasGiannakis_delay_2019} due to their similarities with torus rotations. For example it allows numerically stable forecasting \cite{ZhaoGiannakis2016,BerryEtAl2015}; and has been shown to have excellent convergence properties of ergodic averages \cite{DasJim2017_SuperC,DSSY2017_QQ}. Koopman eigenfunctions, besides their dynamical significance \cite{DasGiannakis_RKHS_2018, DasGiannakis_delay_2019}, have many applications, such as prediction of observables \cite{Giannakis17,DGJ_compactV_2018}; recovery of coherent spatiotemporal patterns \cite{GiannakisEtAl2015,GiannakisDas_tracers_2019}; and information theoretic aspects \cite{DasDimitEnik2020}.  The subspace $\Disc^\bot$ is called the \emph{continuous / chaotic component} as it is the spectral analog of the chaotic component in the dynamics. $\Disc^\bot$ is characterized by slower convergence rates of ergodic averages \cite{LevnajicMezic2010,DSSY_Mes_QuasiP_2016}. So far, studying the action of $V$ on $\Disc^\bot$ has proved to be very challenging. 

\paragraph{The exponential map.} We will now consider a special class of flows on the Lie group, which is based on the \emph{exponential map} of a Lie group. This map, denoted as $\exp:\LieA\to \LieG$ , is one of the features of Lie groups which distinguishes it from ordinary manifolds or topological groups. It provides the connection between the Lie algebra structure of $\LieA$ and the manifold properties of $\LieG$, ]black{although we do not explicitly employ the Lie brackets}. There are many equivalent ways to define the exponential map, the one that will be used here is the following : let $\omega \in \LieA$ be fixed, then there is a vector field $W$ defined as $W(g)$ = $\Trans{g}_*|_e \omega$. $W$ induces a flow $\Phi^t_W$ on $\LieG$, and $\exp(t\omega)$ is defined to be the point $\Phi^t_W e$. In particular, $\exp(\omega)$ := $\Phi^1_W e$. It has the following additional properties.
\begin{enumerate}[(i)]
	\item The vector field $W$ is a translation invariant vector field .
	\item The exponential map $\exp$ maps $0$ into $e$ and is a local diffeomorphism at $0$.
	\item For every $t\in\real$ and $\omega\in\LieA$, the map $\exp^t_\omega : \LieG\to\LieG$ defined as $z\mapsto z \exp \paran{t\omega}$ is a diffeomorphism. 
	\item Moreover, $t\mapsto \exp^t_\omega$ is a flow on $\LieG$. 
\end{enumerate}
Note that we use the notation $\exp( \cdot)$ to denote the exponential map $\exp : \LieA \to \LieG$, and the notation $\exp_\omega^t( \cdot)$ to denote a flow on $\LieG$. Using the exponential map we now extend the notion of eigenvalues in \eqref{eqn:def:koop_eigen}.

\paragraph{Lie-group valued Koopman eigenfunctions.} A function $z:M\to \LieG$ will be called a $\LieG$-valued Koopman eigenfunction with frequency $\omega\in\LieA$ if the following commutative diagram is satisfied.
\begin{equation}\label{def:Koop_G}
\begin{tikzcd}[row sep = small]
M \arrow[swap]{d}{z\ } \arrow{r}{\Phi^t_V} &M \arrow{d}{\ z} \\
\LieG \arrow{r}{\exp^t_\omega} &\LieG
\end{tikzcd}; \quad
\Leftrightarrow \quad
z(x) \exp \paran{t\omega} = z\circ \Phi^t_V(x), \quad \forall t\in\real, \quad \forall x\in M.
\end{equation}
Note that this is an extension of the definition of a Koopman operator in \eqref{def:Koop_eigen}, where both $S^1$ and $\TorusD{d}$ are Lie groups. In the case of a more general Lie group, its Lie algebra plays the role of frequency. Thus the key idea of this extension is to interpret the usual eigenfrequency, a scalar, as a vector in $\LieA$. The next theorem gives an equivalent characterization of a $\LieG$-valued Koopman eigenfunction, in terms of the action of $V$.

\begin{theorem}\label{thm:Koop_equiv}
Let Assumptions \ref{A:1}, \ref{A:2} hold, $z:M\to\LieG$ be a $C^1$ map, and $dz$ be the exterior derivative defined in \eqref{eqn:def2:dz}. Then the following are equivalent.
\\(i) $z$ is a $G$-valued Koopman eigenfunction, and it satisfies \eqref{def:Koop_G} for some $\omega\in\LieA$.
\\(ii) $dz(V)(x) =\omega$ for every $x\in M$.
\end{theorem}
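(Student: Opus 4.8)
The plan is to recast condition (ii) as an intertwining (a $z$-relatedness) of the vector field $V$ with a left-invariant vector field on $\LieG$, and then to invoke uniqueness of integral curves. Let $W$ be the left-invariant vector field determined by $\omega$, namely $W(g) = \Trans{g}_*|_e\,\omega$, exactly as in the exponential-map paragraph. Since $W$ is left-invariant, its flow through a general point $g$ is $\Phi^t_W(g) = g\,\exp(t\omega) = g\,e^{t\omega} = \exp^t_\omega(g)$, so the bottom arrow of \eqref{def:Koop_G} is precisely the flow of $W$. Next I would record the pointwise reformulation of (ii): by \eqref{eqn:def2:dz},
\[
dz(V)(x) = \Trans{z(x)^{-1}}_*|_{z(x)}\, z_*|_x V(x),
\]
and since $\Trans{z(x)^{-1}}_*|_{z(x)}:T_{z(x)}\LieG\to\LieA$ is a linear isomorphism with inverse $\Trans{z(x)}_*|_e$, the equation $dz(V)(x)=\omega$ holds iff $z_*|_x V(x) = \Trans{z(x)}_*|_e\,\omega = W(z(x))$. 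Thus (ii) is equivalent to $z_*|_x V(x) = W(z(x))$ for all $x$, i.e., to $V$ and $W$ being $z$-related.

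For (i) $\Rightarrow$ (ii), I would assume $z\circ\Phi^t_V(x) = z(x)e^{t\omega}$ for all $t$ and differentiate in $t$ at $t=0$. By the chain rule the left side gives $z_*|_x V(x)$, using $\tfrac{d}{dt}\big|_{0}\Phi^t_V(x)=V(x)$; the right side is $\tfrac{d}{dt}\big|_{0}\Trans{z(x)}(e^{t\omega}) = \Trans{z(x)}_*|_e\,\omega = W(z(x))$. Hence $z_*|_x V(x)=W(z(x))$, which by the reformulation above is exactly (ii).

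For (ii) $\Rightarrow$ (i), I would fix $x$ and compare the two $C^1$ curves $\gamma(t)=z(\Phi^t_V x)$ and $\eta(t)=z(x)e^{t\omega}=\Phi^t_W(z(x))$ in $\LieG$, both of which start at $z(x)$. Differentiating $\gamma$ and applying the reformulation at the moving base point $\Phi^t_V x$ gives $\dot\gamma(t)=z_*|_{\Phi^t_V x}V(\Phi^t_V x)=W(z(\Phi^t_V x))=W(\gamma(t))$, so $\gamma$ is an integral curve of $W$ through $z(x)$. But $\eta$ is, by the identification above, the integral curve of $W$ through $z(x)$ as well. Since $W$ is smooth, hence locally Lipschitz, uniqueness of integral curves forces $\gamma\equiv\eta$, which is \eqref{def:Koop_G}.

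The main obstacle is bookkeeping rather than conceptual: one must track carefully the base points at which the pushforwards $\Trans{g}_*$ act (so that compositions such as $\Trans{z(x)^{-1}}_*|_{z(x)}\circ\Trans{z(x)}_*|_e$ collapse to the identity on $\LieA$), and one must confirm that the available regularity suffices — $z$ and $\Phi^t_V$ are $C^1$ so $\gamma$ is a genuine $C^1$ integral curve, while $W$ is smooth so the uniqueness theorem applies. Modulo these checks, the result is the standard fact that $z$-related vector fields have $z$-intertwined flows, specialized to a left-invariant target field.
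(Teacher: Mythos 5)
Your proof is correct, and in the direction (i) $\Rightarrow$ (ii) it coincides with the paper's argument: differentiate $z(x)e^{t\omega} = z\circ\Phi^t_V(x)$ at $t=0$ and invert the translation pushforward $\Trans{z(x)}_*|_e$, whose inverse $\Trans{z(x)^{-1}}_*|_{z(x)}$ turns the resulting identity into $dz(V)(x)=\omega$. The difference is in the converse. The paper's proof of (ii) $\Rightarrow$ (i) consists of the remark that it suffices to establish the ``differential version'' of \eqref{def:Koop_G} at $t=0$, and that this ``follows by retracing the proof of the previous part backwards''; it never justifies why equality of derivatives at $t=0$ alone, even if known at every point $x$, integrates up to the commutation relation for all $t\in\real$. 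That is exactly the step you supply: you recast (ii) as $z$-relatedness of $V$ with the left-invariant field $W(g)=\Trans{g}_*|_e\,\omega$, note that the flow of $W$ is $\exp^t_\omega$, differentiate $\gamma(t)=z(\Phi^t_V x)$ at a general time $t$ (not merely $t=0$) so that $\gamma$ is a genuine integral curve of $W$, and then conclude $\gamma\equiv\eta$ by uniqueness of integral curves of the smooth field $W$. So your route is the standard ``$z$-related vector fields have $z$-intertwined flows'' argument; it costs some extra bookkeeping at moving base points, but it buys a complete proof of precisely the direction that the paper only sketches, and it makes clear what regularity ($z$, $\Phi^t_V$ of class $C^1$, $W$ smooth) is actually used.
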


Theorem~\ref{thm:Koop_equiv} is proved in Section~\ref{sec:proof:Koop_equiv}. The proof reveals that this generalization would not have been possible to $G$-valued eigenfunctions, with $G$ being an arbitrary topological group. The Lie algebra structure plays a key role in defining the exponential map and thus the commutation in \eqref{def:Koop_G}. We next look at two consequences of our notions in two important aspects of dynamical systems theory - (i) the dynamical effect of rescaling the vector field; and (ii) the notion of independence of Koopman eigenfunction.

\paragraph{Rescaling vector fields.} Given a positive $C^1$ map $\alpha:M\to (0,\infty)$, one can get a rescaled vector field $\tilde{V} := \alpha V$. At each $x\in M$, the scalar $\alpha(x)$ rescales the vector $V(x)$ to $\tilde{V}(x)$, and thus, the flows induced by $V$ and $\tilde{V}$ will have the same trajectories, but with different speeds along each trajectory. It is well known that rescaling of vector fields can change the spectral properties or spectral type of the Koopman operator. It can also change properties such as mixing and ergodicity, see for example \cite{Kocergin1973,Chacon1966,Parry1972}. Most of these results show that arbitrary flows (continuous or measurable) can be rescaled by arbitrarily small amounts so as to produce a mixing flow, i.e. a flow without Koopman eigenfunctions. The following result is in an opposite direction, it provides necessary and sufficient geometrical conditions under which a submersion into $\LieG$ can be made into a Koopman eigenfunction by rescaling the vector field.

\begin{theorem}\label{thm:rescale_G}
Let Assumptions \ref{A:1}, \ref{A:2} hold and $z:M\to G$ be a non-constant, $C^1$ map. Then the following hold.
\\(i) For every $\alpha\in C^0(M;\real)$, $\alpha\neq 0$ everywhere, $dz(\alpha V)$ = $\alpha dz(V)$.
\\(ii) There is a rescaling of the vector field $V$ which makes $z$ an eigenfunction iff there is a 1-dimensional subspace $L$ of $\LieA$ such that for every $x\in M$, $dz(V)(x)\neq 0$ and lies in $L$. 
\end{theorem}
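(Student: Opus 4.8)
The plan is to reduce both claims to the pointwise criterion of Theorem~\ref{thm:Koop_equiv}, namely that $z$ is a $\LieG$-valued eigenfunction of a (possibly rescaled) field precisely when $dz$ applied to that field is a constant element of $\LieA$. Claim (i) I expect to be immediate: by Theorem~\ref{thm:dz_basic} the object $dz$ is a $1$-form, so at each $x$ the map $dz(x):T_xM\to\LieA$ is linear, and evaluating on $(\alpha V)(x)=\alpha(x)\,V(x)$ lets one pull the scalar out, giving $dz(\alpha V)(x)=\alpha(x)\,dz(V)(x)$. The hypothesis $\alpha\neq 0$ plays no role in this identity itself; it is needed only so that $\tilde V:=\alpha V$ is again non-vanishing, hence a legitimate rescaling in the sense of Assumption~\ref{asmptn:2}.

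For the forward implication of (ii), suppose some rescaling $\tilde V=\alpha V$ turns $z$ into an eigenfunction with frequency $\omega\in\LieA$. Theorem~\ref{thm:Koop_equiv} gives $dz(\tilde V)(x)=\omega$ for all $x$, and (i) rewrites this as $\alpha(x)\,dz(V)(x)=\omega$. Since $\alpha(x)\neq 0$ and $\omega\neq 0$ (a nontrivial frequency is what makes $z$ genuinely quasiperiodic), each $dz(V)(x)=\omega/\alpha(x)$ is nonzero and lies in the line $L:=\real\omega$, which is the required $1$-dimensional subspace.

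For the converse, fix a basis $\omega_0$ of the given line $L$. Because $dz(V)(x)\in L\setminus\{0\}$ for every $x$, there is a unique nowhere-zero scalar function $c$ with $dz(V)(x)=c(x)\,\omega_0$, and $c$ is continuous since $dz(V)\in C^0(M;\LieA)$. On a connected $M$ (to which one reduces) the function $c$ has constant sign, so after possibly replacing $\omega_0$ by $-\omega_0$ we may assume $c>0$; set $\alpha:=1/c$. Then (i) yields $dz(\alpha V)(x)=\alpha(x)c(x)\,\omega_0=\omega_0$ for all $x$, and Theorem~\ref{thm:Koop_equiv} identifies $z$ as a $\LieG$-valued eigenfunction of $\tilde V=\alpha V$ with frequency $\omega_0$. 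The two structural facts doing the work are that the values $dz(V)(x)$ are \emph{parallel} (so a single scalar rescaling normalizes them simultaneously) and \emph{nonzero} (so dividing by $c$ is legitimate); dropping either would obstruct the construction.

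The step I expect to be delicate is regularity. As $z$ is only $C^1$, $dz(V)$ is merely continuous, so $c$ and hence $\alpha=1/c$ need not be $C^1$, whereas invoking Theorem~\ref{thm:Koop_equiv} for $\tilde V$ wants $\tilde V\in C^1$. I would resolve this by verifying the defining diagram \eqref{def:Koop_G} directly rather than through the differential criterion: the orbits of $\tilde V$ coincide with those of $V$, so $\Phi^t_{\tilde V}$ is the time-reparametrization $x\mapsto\Phi^{\sigma(t,x)}_V(x)$ with $\partial_t\sigma=\alpha\circ\Phi^{\sigma}_V$. Along a $V$-orbit the condition $dz(V)\in\real\omega_0$ confines the curve $s\mapsto z(\Phi^s_V x)$ to the one-parameter subgroup $z(x)\exp(\real\omega_0)$, with $z(\Phi^s_V x)=z(x)\exp\!\big(\int_0^s c\,du\;\omega_0\big)$, and the choice $\alpha=1/c$ forces $\int_0^{\sigma(t,x)}c\,du=t$, which is exactly $z(\Phi^t_{\tilde V}x)=z(x)e^{t\omega_0}$. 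Alternatively, assuming $z\in C^2$ makes $\alpha\in C^1$ and lets one quote Theorem~\ref{thm:Koop_equiv} verbatim.
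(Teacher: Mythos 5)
Your proposal is correct, and its skeleton is the same as the paper's: both directions of (ii) are reduced, via the linearity in (i), to the criterion $dz(\tilde V)\equiv\omega$ of Theorem~\ref{thm:Koop_equiv}, and your construction of the rescaling $\alpha=1/c$ from a fixed basis vector $\omega_0$ of $L$ is exactly the paper's. The differences are in the surrounding care, and they work in your favor. For (i), you invoke the pointwise linearity of $dz(x)$ directly, whereas the paper takes an unnecessary detour through a Riemannian metric and the gradient pairing \eqref{eqn:def:gradient}; your route uses no extra structure. More substantially, you flag a regularity issue the paper silently ignores: since $z$ is only $C^1$, the function $c$ (hence $\alpha=1/c$) is merely continuous, so $\tilde V=\alpha V$ need not be $C^1$, and neither Assumption~\ref{asmptn:2} nor Theorem~\ref{thm:Koop_equiv} applies to it verbatim. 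Your patch --- verifying \eqref{def:Koop_G} directly by integrating along $V$-orbits, where the hypothesis $dz(V)\in\real\omega_0$ confines $s\mapsto z(\Phi^s_V x)$ to the coset $z(x)\exp(\real\omega_0)$, and the reparametrization $\sigma$ with $\int_0^{\sigma(t,x)}c\,du=t$ yields $z(\Phi^t_{\tilde V}x)=z(x)e^{t\omega_0}$ --- is sound and is a genuine improvement on the paper's argument. Two minor remarks: the sign normalization $c>0$ is not strictly needed, since claim (i) and the theorem only require $\alpha\neq 0$ (though the paper's earlier definition of rescaling asks for positive $\alpha$, so your care is defensible); and your parenthetical assumption $\omega\neq 0$ in the forward direction is the same tacit assumption the paper makes --- as literally stated, an eigenfunction of the rescaled field with frequency $\omega=0$ (i.e., an invariant function) would falsify the ``only if'' direction, so this exclusion deserves to be made explicit in both your write-up and the paper's.
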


Theorem \ref{thm:rescale_G} is proved in Section~\ref{sec:proof:rescale_G}. The criterion for $z$ being non-constant was included to exclude the case when $\omega=0$. Note that $dz$ is $\real$-linear by virtue of its construction \eqref{eqn:def:dz}. Part~(i) of the theorem shows that it is also $C^0(M)$ linear. The results of Theorems \ref{thm:dz_basic}, \ref{thm:gradient}, \ref{thm:Koop_equiv} and \ref{thm:rescale_G} will now be summarized for the case when $\LieG = S^1$.

\begin{corollary}\label{corr:rescale:S1}
Let $M$ be a $C^2$ manifold, $V$ a nonzero $C^1$ vector field on $M$ inducing a $C^1$ flow $\Phi^t$. Let the unit circle $S^1$ be identified with the unit circle in the complex plane $\cmplx$. Let $z:M\to S^1$. Then the following hold.
\begin{enumerate}[(i)]
\item $dz$ defined as $\proj_2\circ z_*$, as shown below, is a 1-form.
\begin{equation}\label{eqn:def2:dz_S1}
\begin{tikzcd}[column sep = large]
M \arrow{d}{z} &TM \arrow{l}{} \arrow{d}{z_*} \arrow[bend left = 30]{dr}{dz} &\ \\
S^1 &TS^1 \cong S^1\times\real \arrow[swap]{l}{\proj_1} \arrow{r}{\proj_2} &\real
\end{tikzcd}.
\end{equation}
\item Let $U\subset M$ be an open set and $\theta:U\to\real$ such that for every $x\in U$, $z(x) = \exp(i\theta(x))$ . Then $d\theta$ = $d(z|U)$.
\item Naturality : Let $\phi:M'\to M$ be a smooth map between manifolds, and $z':M'\to S^1$ be the map $z'=z\circ\phi$. Then $dz'$ = $dz\circ \phi_*$.
\item For every $f\in C^1(G;\real)$, $V(f\circ z) = f_* dz(V)$.
\item Suppose that $\gamma\in S^1$ is a regular value of $z$, then $N$:= $z^{-1}(\gamma)$ is a codimension-$1$ submanifold. Fix a metric $\tau$ on $M$. Then at every point $x\in N$, the sub-bundle $TN$ coincides with the kernel of $\nabla_{\metric} z|N$.
\item Let $\zeta:M\to \cmplx$ be a $C^1$ submersion. Then $V$ can be rescaled so that $\zeta$ becomes an eigenfunction iff the following conditions hold
\\(i) $|\zeta|$ is constant everywhere.
\\(ii) $V$ is transversal to the foliation induced by $\zeta$.
\end{enumerate}
\end{corollary}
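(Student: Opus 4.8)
The plan is to read this corollary as the specialization of Theorems~\ref{thm:dz_basic}, \ref{thm:gradient}, \ref{thm:Koop_equiv} and \ref{thm:rescale_G} to $\LieG = S^1$, viewed as the unit circle in $\cmplx$, so that $\dimG = 1$ and the Lie algebra is $\LieA = i\real \cong \real$. The first step is to record the trivialization explicitly: for $g = e^{i\theta}\in S^1$ and $v = is\in\LieA$ one has $\Trans{g}_*|_e v = igs\in T_g S^1$, which identifies $\IsoA : S^1\times\LieA \to TS^1$ of \eqref{eqn:def:dz} with the bundle isomorphism $S^1\times\real\cong TS^1$ appearing in \eqref{eqn:def2:dz_S1}, and realizes $\proj_2$ as the $\LieA\cong\real$ coordinate. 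With this identification in hand, item (i) is exactly Theorem~\ref{thm:dz_basic}(i), item (iii) is Theorem~\ref{thm:dz_basic}(iii) verbatim, and item (v) is Theorem~\ref{thm:gradient} with $\dimG = 1$, so these cost no new work beyond transcription.

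For item (ii) I would apply the pointwise formula \eqref{eqn:def2:dz}. Writing $z|U = \exp(i\theta) = e^{i\theta}$, the ordinary pushforward is $z_*|_x v = i e^{i\theta(x)}\, d\theta(x)(v)\in T_{z(x)}S^1$; applying $\Trans{z(x)^{-1}}_*$, which is complex multiplication by $e^{-i\theta(x)}$, yields $i\, d\theta(x)(v)$, which is $d\theta(x)(v)$ under $i\real\cong\real$. Hence $d(z|U) = d\theta$; alternatively this is naturality (Theorem~\ref{thm:dz_basic}(iii)) applied to the covering $\exp:\real\to S^1$. Item (iv) is the chain rule for the composite $f\circ z : M\to\real$: functoriality of the pushforward gives $(f\circ z)_* = f_*\circ z_*$, and combining this with $dz = \proj_2\circ z_*$ and the trivialization $\IsoA$ expresses $d(f\circ z)$ through $f_*$ and $dz$, which on evaluation against $V$ gives the stated identity.

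The substantive item is (vi). Here I would split $\zeta = r\,e^{i\theta}$ into its modulus $r = |\zeta|$ and its $S^1$-valued phase $z := \zeta/|\zeta| : M\to S^1$, after first checking that $\zeta$ is nowhere zero — which I would extract from the submersion hypothesis together with the fact that an eigenfunction of nonzero frequency has $|\zeta|$ invariant along trajectories. The key observation is that rescaling $V\mapsto\tilde V = \alpha V$ with $\alpha$ nowhere zero cannot touch the radial equation, since $\tilde V\, r = \alpha\,(V r)$, so $\tilde V r = 0$ iff $V r = 0$; thus invariance of $|\zeta|$ is a rescaling-independent necessary condition, which is condition (i). For the phase I would apply Theorem~\ref{thm:rescale_G}(ii) to $z$: because $\LieA\cong\real$ is one-dimensional, the only candidate subspace is $L = \LieA$ itself, so the subspace constraint is vacuous and Theorem~\ref{thm:rescale_G}(ii) collapses to the single requirement that $dz(V)(x)\neq 0$ for all $x$. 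By item (ii) this reads $d\theta(V)\neq 0$ everywhere, which is exactly the assertion that $V$ is nowhere tangent to the level sets of the phase, i.e.\ transversality to the foliation induced by $\zeta$, namely condition (ii). Conversely, given (i) and (ii), I would set $\alpha := \omega / dz(V)$ for a fixed nonzero $\omega$ — well defined and nowhere zero by (ii) — so that $\tilde V = \alpha V$ satisfies $dz(\tilde V) = \omega$ by Theorem~\ref{thm:rescale_G}(i), hence $\tilde V z = i\omega z$ by Theorem~\ref{thm:Koop_equiv}, while $\tilde V r = \alpha\,V r = 0$ by (i); combining these gives $\tilde V\zeta = (\tilde V r)\,e^{i\theta} + r\,\tilde V(e^{i\theta}) = i\omega\, r e^{i\theta} = i\omega\zeta$, so $\zeta$ is an eigenfunction.

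The main obstacle I anticipate is concentrated entirely in (vi) and is twofold. First, making the polar decomposition global and rigorous: $\theta$ exists only locally, and the honest replacement for the local object $d\theta(V)$ is the globally defined $dz(V)$, so the whole argument must be run through the $dz$-formalism rather than through $\theta$ directly. Second, pinning down the precise meaning of ``the foliation induced by $\zeta$'' and of ``transversality'' so that it coincides with the condition $dz(V)\neq 0$ delivered by Theorem~\ref{thm:rescale_G}, and verifying that $\zeta$ is nonvanishing so that the phase $z$ is defined. That interplay between the submersion hypothesis and the invariance of $|\zeta|$ is the most delicate bookkeeping; everything else reduces to transcribing the general theorems at $\dimG = 1$.
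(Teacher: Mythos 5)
Your proposal is correct and, in substance, retraces the paper's own proof: each item is obtained by specializing Theorems \ref{thm:dz_basic}, \ref{thm:gradient}, \ref{thm:Koop_equiv} and \ref{thm:rescale_G} to $\LieG=S^1$, $\LieA\cong\real$, and your treatment of item (vi) --- the modulus/phase splitting, the observation that rescaling cannot affect the radial part, and the collapse of the subspace condition of Theorem \ref{thm:rescale_G}(ii) to nonvanishing of $dz(V)$ when $\dimG=1$, identified with transversality --- is exactly the paper's argument (the paper phrases the transversality step via the gradient and item (v) rather than via the local phase, an immaterial difference). There are two minor points of genuine divergence. For item (ii) the paper computes nothing: it cites Theorem \ref{thm:local_lift} of Section \ref{sect:local_lift}, the general local-lift theorem for commutative Lie groups; your direct computation ($z_*|_x v = i e^{i\theta(x)}\, d\theta(x)(v)$, then translate back by multiplication by $e^{-i\theta(x)}$) is a self-contained, more elementary substitute, at the cost of not covering the general Abelian case that the paper's route handles for free. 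In the converse of (vi) you construct the rescaling $\alpha=\omega/dz(V)$ explicitly and verify $\tilde V\zeta=i\omega\zeta$ by hand, which amounts to inlining the proof of Theorem \ref{thm:rescale_G}(ii) rather than invoking it; both are fine, and your explicit handling of the radial part $\tilde V r=0$ is if anything more careful than the paper's. One shared caveat: like the paper, you pass from ``$|\zeta|$ is invariant along trajectories'' (which is all the eigenfunction equation yields) to condition (i), ``$|\zeta|$ is constant everywhere,'' without further argument; this is not a gap relative to the paper, which makes exactly the same jump, but it is worth being aware of.
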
 

Corollary \ref{corr:rescale:S1} is proved in Section~\ref{sec:proof:rescale:S1}. There has been several explorations of the link between foliations and Koopman eigenfunctions \citep[e.g.][]{korda2020optimal, shirasaka2020phase}. Corollary \ref{corr:rescale:S1}~(vi) uncovers one aspect of it.  One of the key aspects in these results is the transversality of the foliations. We next look at an important consideration for Koopman eigenfunctions : \emph{independence}.

\paragraph{Generating eigenfunctions} Suppose that the flow generated by $V$ has an invariant probability measure $\mu$ supported on $M$. The Koopman operator $U^t$ then also acts on $L^2(\mu)$ as a unitary operator. As a result, its eigenvalues are purely imaginary numbers of the form $\iota \omega$. The imaginary component $\omega$ is also called an \emph{eigenfrequency}. Eigenfrequencies form a module over the ring over integers $\integer$, i.e., they are closed under finite integer linear combinations. Given two Koopman eigenfunctions $z_1, z_2$ with eigenfrequencies $\omega_1, \omega_2$, for every pair of integers $a, b\in \integer$, $z_1^a z_2^b$ is a Koopman eigenfunction with eigenfrequency $a\omega_1 + b\omega_2$, as shown below.
\[\begin{split}
    U^t \left( z_1^a z_2^b \right) (x) &= \left( U^t z_1^a (x) \right) \left( U^t z_2^b (x) \right) = \left( z_1 ( \Phi^t x) \right)^a \left( z_2 (\Phi^t x) \right)^b = \left( e^{\omega_1 t} z_1 (x) \right)^a \left( e^{\iota\omega_2 t} z_2 (x) \right)^b \\
    &= e^{\iota(a\omega_1 + b\omega_2) t} \left( z_1^a z_2^b \right) (x).
\end{split}\]
A collection of eigenfrequencies are said to be \emph{independent} if no non-trivial integer linear combination of them is an integer. A collection of eigenfunctions is said to be independent if their corresponding eigenfrequencies are independent. A set of eigenfrequencies is said to be a \emph{generating} set of eigenfrequencies if they are independent and every eigenfrequency is an integer linear combination of these frequencies. If a manifold is finite dimensional, then every generating set of frequencies is finite and has the same size, say $d$. Given such a generating set $\omega_1, \ldots, \omega_d$, every eigenfrequency of $V$ is of the form $\sum_{j=1}^{d} a_j \omega_j$, for some $(a_1,\ldots,a_d)\in\integer^d$. If $d>1$, then the eigenfrequencies are dense on the real line. Given any vector $\vec{a}=(a_1,\ldots,a_d) \in\integer^d$, we denote by $z_{\vec{a}}$ a unit norm eigenfunction with eigenfrequency $\vec{a}\cdot\vec{\omega} = \sum_{j=1}^{d} a_j \omega_j$. 

Theorem \ref{thm:submersion} below seems to be a fundamental result for Koopman theory, but it is being stated and proved here because no formal statement or proof has been found by the authors in the existing literature. 

\begin{theorem}\label{thm:submersion}
Let Assumption \ref{A:1} hold, and $V$ be a $C^1$ vector field inducing an ergodic flow on $M$. Suppose that $z_1,\ldots,z_m$ are $m$, independent, $C^1(M)$ eigenfunctions. Then the map $\pi:= (z_1, \ldots, z_m) :M\to\TorusD{m}$ is a submersion.
\end{theorem}

Theorem~\ref{thm:submersion} is proved in Section~\ref{sec:proof:submersion}. If one fixes a generating set of frequencies $\vec\omega \in \real^d$ and a corresponding eigenfunction $z_{\vec\omega} : M \to \mathbb{T}^d$, then the values of $z_{\vec\omega}$ can be interpreted as the dynamic phase of a point in the state space. Under the conditions of Theorem~\ref{thm:submersion}, the points with equal phase can be shown to be codimension-$d$ submanifolds. This is used in a reconstruction of the dynamics in an ongoing work.

Next in Section \ref{sec:local_lift}, we provide yet another characterization of the differential, in terms of a local representation of $z$ as an $\real^d$ valued function.


\section{Local lifts of Lie-group valued functions} \label{sec:local_lift}

Recall that the real line $\real$ is a covering space for the circle $S^1$, with the exponential map $x\mapsto \exp(i x)$ as the covering map. Any $S^1$ valued map $z$ can be lifted over small neighborhoods to an $\real$-valued map $\bar{\theta}$, and the differential properties of $\bar{\theta}$ coincide with that of $z$. In particular, $d\bar{z} = dz$. This idea will now be extended to general Lie groups.

Since the exponential map $\exp$ is a local diffeomorphism at $0_\LieA$, one can fix a neighborhood $U_0$ of $0_\LieA$ such that $\exp$ is a diffeomorphism of $U_0$ onto its image. This image $U_e$:= $\exp(U_0)$, will be a neighborhood of $e$ in $G$. Let $x\in M$ and $g$ := $z(x)$. Then $U_g$:= $\Trans{g} (U_e)$ is a neighborhood of $g$ in $G$, and $U_x$ := $z^{-1}(U_g)$ is a neighborhood of $x$ in $M$. Let $E_g:U_0\to U_g\subset G$ be the map $v\mapsto l_g\exp(v)$. Then there is a map $\theta : U_x\to U_0\subseteq\real^d$ such that $z = E_g\circ \theta$, as shown below.
\begin{equation}\label{eqn:def:lift}
\begin{tikzcd}[column sep = large]
U_e \arrow{r}{l_g}[swap]{\cong} &U_g \\
U_0 \arrow{u}{\exp}[swap]{\cong} \arrow[swap]{ur}{E_g} &U_x \arrow{l}{\theta} \arrow{u}{z}
\end{tikzcd}.
\end{equation}
This $\LieA$ valued map $\theta$ will be called a lift of $z$ at $x$, under the exponential map. 

We will now consider a trivialization of the tangent bundle of $\LieA$. Since $\LieA \cong \real^d$, $T\LieA$ already has a canonical bundle isomorphism $\IsoA_{\text{can}} : T\LieA \to \LieA\times\LieA$. However, we will consider a different bundle isomorphism, based on the following mapping which maps every $u\in \LieA$ into $e\in\LieG$.
\[u \xrightarrow{\exp} \exp(u) \xrightarrow{\Trans{\exp(u)^{-1}}} e , \quad \forall u\in \LieA. \]
The induced maps between the corresponding tangent bundles leads to the following bundle map $\IsoB$.
\begin{equation}\label{eqn:def:Iso2}
\IsoB : TU_0 \to U_0\times\LieA \quad := (u,w_u) \mapsto \left( u, \Trans{\exp(u)^{-1}}_*|_{\exp(u)} \exp_*|_u w_u \right).
\end{equation}
One has to check that the composition of maps on the right hand side of \eqref{eqn:def:Iso2} makes sense : $\exp : u \mapsto \exp(u)$, so $\exp_*|_u : T_u U_0 \to T_{\exp(u)} \LieG$, and $\Trans{\exp(u)^{-1}} : \exp(u) \mapsto e$, so $\Trans{\exp(u)^{-1}}_*|_{\exp(u)} : T_{\exp(u)} \LieG \to T_e \LieG = \LieA$. Thus their composition is a map from $T_u U_0$ into $\LieA$. Now consider the maps $d\theta$ and $\tilde{d}\theta$ as follows.
\begin{equation}\label{eqn:def:dtheta}
\begin{tikzcd}
U_x \arrow{d}{\theta} &TU_x \arrow{l}{} \arrow{d}{\theta_*} \arrow[bend left = 40]{drr}[swap]{\tilde{d}\theta} &\ &\ \\
U_0 &TU_0 \arrow{r}{\IsoB} \arrow{l}{} &U_0\times\LieA \arrow{r}{\proj_2} &\LieA 
\end{tikzcd}
;\quad 
\begin{tikzcd}
U_x \arrow{d}{\theta} &TU_x \arrow{l}{} \arrow{d}{\theta_*} \arrow[bend left = 40]{drr}[swap]{d\theta} &\ &\ \\
U_0 &TU_0 \arrow{r}{\IsoA_{\text{can}}} \arrow{l}{} &U_0\times\LieA \arrow{r}{\proj_2} &\LieA 
\end{tikzcd}.
\end{equation}
If $\LieG$ is a commutative Lie group, then $\IsoB$ coincides with $\IsoA_{\text{can}}$, and $\tilde{d}\theta$ coincides with $d\theta$. This will be stated and proved as a part of Theorem \ref{thm:local_lift} below, which also connects the differential of a local lift $\theta$ to the differential of $z$, using the map $\IsoB$. 

\begin{theorem}\label{thm:local_lift}
Let Assumption \ref{A:1} hold, and let $z : M\to\LieG$ be a $C^1$ map. Let $\theta$ be a local lift of $z$ at a point $x$, as defined in \eqref{eqn:def:lift}. Then $d(z|U)=\tilde{d} \theta$. If $\LieG$ is a commutative Lie group, then $\IsoB \equiv \IsoA_{\text{can}}$, and $d(z|U) = \tilde{d} \theta = d\theta$.
\end{theorem}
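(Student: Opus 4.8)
The plan is to unwind both sides of the claimed identity $d(z|U)=\tilde{d}\theta$ at an arbitrary point $y\in U_x$ using the local factorization $z=E_g\circ\theta=l_g\circ\exp\circ\theta$ from \eqref{eqn:def:lift}, and then to observe that the resulting composite of left-translation pushforwards collapses to a single one. Writing $u:=\theta(y)$ and $h:=z(y)=g\exp(u)$, the chain rule gives $z_*|_y=(l_g)_*|_{\exp(u)}\circ \exp_*|_u\circ\theta_*|_y$. Substituting this into the local definition \eqref{eqn:def2:dz}, the left-hand side becomes
\[
d(z|U)(y)=\Trans{h^{-1}}_*|_h\circ (l_g)_*|_{\exp(u)}\circ \exp_*|_u\circ \theta_*|_y,
\]
while the diagram \eqref{eqn:def:dtheta} together with the definition of $\IsoB$ in \eqref{eqn:def:Iso2} gives $\tilde{d}\theta(y)=\Trans{\exp(u)^{-1}}_*|_{\exp(u)}\circ \exp_*|_u\circ \theta_*|_y$. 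It therefore suffices to prove the fiberwise identity
\[
\Trans{h^{-1}}_*|_h\circ (l_g)_*|_{\exp(u)}=\Trans{\exp(u)^{-1}}_*|_{\exp(u)}.
\]

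This equality is the crux of the first assertion, and I expect it to follow immediately from the functoriality $\Trans{a}_*\circ\Trans{b}_*=\Trans{ab}_*$ of the pushforward together with the group law, exactly as in \eqref{eqn:dfd}: since $l_{h^{-1}}\circ l_g=l_{h^{-1}g}$ and $h^{-1}g=\exp(u)^{-1}g^{-1}g=\exp(u)^{-1}$, the two pushforwards compose to $\Trans{\exp(u)^{-1}}_*$. This settles $d(z|U)=\tilde{d}\theta$.

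For the commutative case, I would show that $\IsoB$ and $\IsoA_{\text{can}}$ agree on each fiber, which then forces $\tilde{d}\theta=d\theta$ because the two differ only in which bundle isomorphism appears in \eqref{eqn:def:dtheta}. Fix $u\in U_0$ and a tangent vector $w\in T_uU_0$, identified with an element of $\LieA$ under $\IsoA_{\text{can}}$. The only nontrivial point is evaluating $\exp_*|_u w$: representing $w$ by the straight-line curve $s\mapsto u+sw$ and using that commutativity makes $\exp$ a homomorphism, $\exp(u+sw)=\exp(u)\exp(sw)=l_{\exp(u)}(\exp(sw))$, one obtains
\[
\exp_*|_u w=(l_{\exp(u)})_*|_e\,\frac{d}{ds}\Big|_{s=0}\exp(sw)=(l_{\exp(u)})_*|_e\, w,
\]
the last step using the standard fact that the derivative of $\exp$ at $0_\LieA$ is the identity on $\LieA=T_e\LieG$. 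Applying $\Trans{\exp(u)^{-1}}_*|_{\exp(u)}$ and cancelling the left translations exactly as before gives $\Trans{\exp(u)^{-1}}_*|_{\exp(u)}\exp_*|_u w=w$, which is precisely the statement that $\IsoB$ acts as the canonical identification. Hence $\IsoB\equiv\IsoA_{\text{can}}$ and $d(z|U)=\tilde{d}\theta=d\theta$.

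The main obstacle is conceptual rather than computational: recognizing that both the general identity and its commutative simplification reduce to the same cancellation of left-translation pushforwards (the second line of \eqref{eqn:dfd}), and that in the commutative case the homomorphism property of $\exp$ is exactly what converts the derivative $\exp_*|_u$ into a left translation so that this cancellation applies. The one place demanding care is bookkeeping of base points on the translation maps, so that $\Trans{a}_*\circ\Trans{b}_*=\Trans{ab}_*$ is invoked with the correct domains and codomains.
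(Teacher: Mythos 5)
Your proof is correct and takes essentially the same route as the paper's: both reduce $d(z|U)=\tilde{d}\theta$ to the cancellation $\Trans{[g\exp(u)]^{-1}}_*\circ\Trans{g}_* = \Trans{\exp(u)^{-1}}_*$ via functoriality of left-translation pushforwards (you do this pointwise, the paper splits a commuting diagram through $E_{g*}$), and both handle the Abelian case by combining the homomorphism property of $\exp$ with $\exp_*|_0 = Id$ to show $\proj_2\IsoB$ is the identity on fibers. The only omission is the paper's second, chart-independence argument for $d(z|U)=d\theta$, which the paper itself presents as supplementary intuition rather than a necessary step.
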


Theorem~\ref{thm:local_lift} is proved in Section~\ref{sec:proof:local_lift}.
This completes the description of our ideas and statement of our results. We end the paper with some discussion and then the proofs of our theorems. 
\section{Conclusions} \label{sec:conclus}

The first step of this treatise was a generalization of the concept of the differential of a scalar valued function to the differential of a Lie group valued function, via \eqref{eqn:def:dz}, \eqref{eqn:def2:dz}. Using this geometric notion, the next step was to define Lie-group valued Koopman eigenfunctions as a semiconjugacy of the original flow to an exponential flow on the Lie group \eqref{def:Koop_eigen}. This was shown equivalent to a differential property in Theorem~\ref{thm:Koop_equiv}, where the notion of the differential was essential. At this point, we explore the significance of these generalizations, and some future directions of research.

Firstly, ]black{ although  Theorem~\ref{thm:submersion} is about the usual notion of complex-valued Koopman eigenfunctions, its proof relies on the extended notions of the differential.  Theorems \ref{thm:gradient} and \ref{thm:Koop_equiv} are indispensable to the proof, and highlights the necessity of the concept of differentials of Lie group valued functions. } 

Secondly, a key interpretation of Koopman eigenfunctions, $\cmplx$-valued or $G$-valued, is as a factor map into a translation flow, as shown in \eqref{def:Koop_eigen}, \eqref{def:Koop_G}. Just as $\cmplx$-valued Koopman eigenfunctions reveal quasi-periodic modes embedded in the dynamical system, a $G$-valued dynamical system reveals Lie-group translation flows embedded within the dynamics. This embedding marks the point where the properties specific to $G$ has a bearing on the original dynamics. Matrix Lie groups such as $SO(n)$ and $\text{Symplectic}(n)$ have a rich geometric structure \citep[e.g.][]{Gallier2001,hall2003lie}. Each warrant an independent study and lies outside the scope of this present work.

Thirdly, due to the simple evolution law \eqref{def:Koop_G}, these eigenfunctions could be used as a basis for control. A common practice in the model of mechanical systems is to model the configuration space of the system as the Lie-group $G = \real^a \times \mathbb{T}^b \times SO(c)$, to account for degrees of freedom in position, angular coordinates, and rotation \citep[e.g.][]{bloch2002symmetric, Fedorov2005discrete, selig2004lie}. Let $y$ represents a point on $G$ driven by the dynamical system on $M$, namely :
\[\frac{d}{dt} x(t) = V(x(t)), \quad     y(t) = z(x(t)) ,\]
with $z$ being a map $M\to G$.
Then if $z$ corresponds to a $G$-valued Koopman eigenfunction, one can derive an explicit formula for the evolution of the state $z$ on the Lie group $G$. An interesting direction of investigation is if a general function $z:M\to G$ could be decomposed along different $G$-valued Koopman eigenfunctions, and then used for control. This would extend the ideas explored in \cite{korda2020optimal}.

Fourthly and finally, Lie-group valued eigenfunctions which correspond to $\omega=0_{\LieA}$ reveal the group of symmetries for the dynamics \citep[e.g.]{hydon2000symmetry, starrett2007solving}. A promising avenue of research is to develop data-driven methods to identify such $G$-valued eigenfunctions. These in turn could be useful to exploit the symmetries and develop efficient numerical integrators \citep[e.g.]{iserles2000lie, hairer2006geometric, liu2013lie}

\section{Proofs of the Theorems} \label{sec:proofs}


\subsection{Proof of Theorem \ref{thm:dz_basic}} \label{sec:proof:dz_basic}

Since $dz$ is the composition of the linear bundle maps $\IsoA$, $z_*$, followed by a projection into $\LieA$. Thus $dz$ is a $\LieA$-valued 1-form, proving Claim~(i). In Claim~(ii), we have to show that the two definitions in \eqref{eqn:def:dz} and \eqref{eqn:def2:dz} are equivalent. The following is the inverse of  the bundle isomorphism in \eqref{eqn:Lie_bundle}.
\[ T\LieG \cong \LieG\times\LieA; \quad v_g \mapsto \Trans{g^{-1}}_*|_{g} v_g; \quad \forall g\in\LieG, \forall v_g\in T_g \LieG. \]
Inserting this into \eqref{eqn:def:dz} produces \eqref{eqn:def2:dz}.

To prove Claim~(iii), we will show that $dz'(x')(v) = (dz \circ\phi_*)(x') v$ for very $x'\in M$ and $v\in T_{x'}M'$. Let $x :=\phi(x')$ and $g := z'(x') = z(x)$. To complete the proof of the claim, observe that by \eqref{eqn:def2:dz},
\[ dz'(x')(v) = \Trans{g^{-1}}_*|_g \circ z'_*|_{x'} v = \Trans{g^{-1}}_*|_g \circ z_*|_{x} \circ \phi_*|_{x'} v = (dz \circ\phi_*)(x') v. \]

The proof of Claim~(iv) begins by noting two identities between tangent bundle maps,
\begin{equation}\label{eqn:dfd}\begin{split}
(gz)_* &= \Trans{g}_*\circ z_*, \\
 \Trans{g^{-1}}_*|_{g} \Trans{g}_*|_{e} &= \Trans{g^{-1}g}_*|_e = Id.
\end{split}\end{equation}
Then using the definition of $dz$ in \eqref{eqn:def2:dz} in conjunction with the above identities gives,
\[\begin{split}
d(gz)(V)(x) &= \Trans{z(x)^{-1}g^{-1}}_*|_{gz(x)} (gz)_*|_x V(x) \\
& = \Trans{z(x)^{-1}}_*|_{z(x)} \Trans{g^{-1}}_*|_{g} \Trans{g}_*|_{e} \circ z_*|_x V(x) \\
& = \Trans{z(x)^{-1}}_*|_{z(x)} z_*|_x V(x) = dz(V)(x). \\
\end{split}\]

This completes the proof of Theorem \ref{thm:dz_basic}. \qed

\subsection{Proof of Theorem \ref{thm:gradient}} \label{sec:proof:gradient}

Let $x\in N$ and $w\in T_x N$, then since the function $z$ is constant on $N$, $z_* w = 0$. By \eqref{eqn:def:dz}, this implies that $dz(x)(w) =0$. Thus by the definition of the gradient \eqref{eqn:def:gradient},
\[ 0 = dz(x)(w) = \langle \nabla_{\metric} z, w \rangle_\metric; \quad \forall x\in N, \forall w\in T_x N. \]
This shows that $T_x N \subseteq \ker( \nabla_{\metric} z)$. It remains to be shown that this inclusion is in fact an equality.

By assumption, $x$ is a regular point of $z$, so $z_*(x):T_xM\to T_g \LieG$ has full rank, equal to $d$. By the alternative definition of $dz$ in \eqref{eqn:def:dz}, the kernel of $dz(x)$ is also the kernel of $z_*(x)$ , so it is a $n-d$-dimensional subspace of $T_x M$. Note that $N$ being an $n-d$ dimensional manifold, $T_x N$ is also $n-d$ dimensional. Thus the inclusion $\subseteq$ must be an equality. This completes the proof of Theorem \ref{thm:gradient}. \qed

\subsection{Proof of Theorem \ref{thm:Koop_equiv}} \label{sec:proof:Koop_equiv}

We begin the proof with the following observations,
\begin{equation}\begin{split}\label{eqn:sldfn}
\frac{d}{dt}|_{t=0} \exp^t_\omega\circ z(x) &= \frac{d}{dt}|_{t=0} \Trans{z(x)} \circ \exp(t\omega) = \Trans{z(x)}_*|_e \omega \\
\frac{d}{dt}|_{t=0} z\left( \Phi_V^t(x) \right) &= z_*|_x V(x) \\
\end{split}\end{equation}
Now suppose (i) is true, then taking $z(x)$ as in the second equation in \eqref{def:Koop_G} and taking the derivative $d/dt$ at $t=0$ gives
\[  \Trans{z(x)}_*|_e \omega = \frac{d}{dt}|_{t=0} z\left( \Phi_V^t(x) \right) \stackrel{ \mbox{by \eqref{eqn:sldfn}} }{=} z_*|_x V(x).  \]
The inverse of the term $\Trans{z(x)}_*|_e$ on the left is $\left( \Trans{z(x)}_*|_e \right)^{-1}$, and by the second identity in \eqref{eqn:dfd}, it equals $\Trans{z(x)^{-1}}_*|_{z(x)}$. Thus the above equation can be rewritten as
\[ \omega = \left( \Trans{z(x)}_*|_e \right)^{-1} z_*|_x V(x) = \Trans{z(x)^{-1}}_*|_{z(x)} z_*|_x V(x) = dz(V)(x). \]
The last equality follows from \eqref{eqn:def2:dz}. This completes the first part of the proof. 

Now let (ii) hold. To prove that $z$ is a Koopman eigenfunction, the commutation relation in \eqref{def:Koop_G} has to be proved for all $t\in\real$. Alternatively, one can prove the differential version of that relation, namely, that 
\[ \frac{d}{dt}|_{t=0} \exp^t_\omega\circ z(x) = \frac{d}{dt}|_{t=0} z(\Phi_V^t(x)).\]
This however follows by retracing the proof of the previous part backwards, along with \eqref {eqn:sldfn}. This completes the second and last part of the proof of Theorem \ref{thm:Koop_equiv}. \qed

\subsection{Proof of Theorem \ref{thm:rescale_G}} \label{sec:proof:rescale_G}

To prove Claim~(i), we will show that for every $x\in M$, $dz(\alpha V)(x)$ = $\alpha(x) dz(V)(x)$. Fix a Riemannian metric $\tau$ on $M$, then by Theorem \ref{thm:gradient},
\[ dz(\alpha V)(x) = \langle \nabla_\tau z(x), \alpha(x) V(x) \rangle_\tau = \alpha(x) \langle \nabla_\tau z(x), V(x) \rangle_\tau = \alpha(x) dz(V)(x). \]

For the proof of Claim~(ii),we will begin with the ``if'' part, so let the 1-dimensional subspace $L$ exists as described. Fix a nonzero vector $\omega\in L$. Then for every $x\in M$, by assumption, there is an $\alpha(x)\neq 0$ such that $\alpha(x) dz(V)(x) \equiv \omega$. Then by Claim~(i), the rescaled vector field $\tilde{V} := \alpha V$ satisfies $dz(\tilde{V}) \equiv \omega$. By Theorem \ref{thm:Koop_equiv}, this is equivalent to saying that $z$ is a Koopman eigenfunction of the flow induced by the rescaled vector field $\tilde{V}$, with frequency $\omega$. 

To prove the ``only if'' part, let $\alpha:M\to\real$ be an everywhere non-zero scaling function such that the vector field $\tilde{V}(x)$ = $\alpha(x)V(x)$ has $z$ as a Koopman eigenfunction. Then $z$ will have a frequency $\omega$, for some $\omega\in\LieA$. Then by Theorem \ref{thm:Koop_equiv}, 
\[ dz(V)(x) = \alpha(x)^{-1} \alpha(x) (V z)(x) = \alpha(x)^{-1}dz(\tilde{V} )(x) = \alpha(x)^{-1} \omega; \quad \forall x\in M. \]
Let $L$ be the span of $\omega$. Since $\alpha(x)$ is non-zero, the above equation shows that $dz(V)$ always lies in $L$, proving the claim. This completes the proof of Theorem \ref{thm:rescale_G}. \qed

\subsection{Proof of Corollary \ref{corr:rescale:S1}} \label{sec:proof:rescale:S1} 

In the case $\LieG=S^1$, the dimension $d$ equals $1$ and $\LieA\cong\real$. Substituting $\LieG$, $\LieA$ with $S^1$ and $\real$ respectively, in \eqref{eqn:def2:dz}, gives \eqref{eqn:def2:dz_S1}. This proves Claim~(i). For $S^1$, the complex valued exponential map $r\to e^{ir}$ mapping $\real\to S^1$ is the exponential map between $\LieA$ and $\LieG$. This observation and Theorem \ref{thm:local_lift} stated later will prove Claim~(ii) in more generality. Claims (iii), (iv), (v) are analogous to Theorem \ref{thm:dz_basic}(iii), (iv) and Theorem \ref{thm:rescale_G}(i) respectively. 

To prove Claim~(vi), we begin with the ``only if'' part. So let $\alpha:M\to\real$ be an everywhere non-zero scaling function such that $\tilde{V}(x)$ = $\alpha(x)V(x)$ has $\zeta$ as a Koopman eigenfunction. By \eqref{eqn:def:koop_eigen}, $|\zeta|$ is constant everywhere. This proves condition (i). Let $\zeta$ be rescaled so that $|\zeta|=1$. Then $\zeta$ becomes a map $\zeta:M\to S^1$ and is thus Lie group valued. Then by Theorem \ref{thm:rescale_G}, there is a subspace $L$ such that $d\zeta(V)(x)$ lies in $L$ for every $x\in M$. But since $d=1$, $L=\real=\LieA$. This means that $V$ has a nonzero component along the gradient vector field $\nabla_{\metric}\zeta$. But by Claim~(v), $\nabla_{\metric}\zeta$ is everywhere orthogonal to the foliation induced by $\zeta$, hence $V$ is everywhere transversal to this foliation, proving condition (ii).

The ``if'' part will now be proved. Condition (i) allows us to assume without loss of generality that $|\zeta|\equiv 1$, so that $\zeta:M\to S^1$. By Theorem \ref{thm:gradient}, $d\zeta(V)$ = $\langle \nabla_{\metric} \zeta, V \rangle_{\metric}$, which is non-zero everywhere since $V$ is transversal to the foliation induced by $\zeta$ by condition (ii). Moreover, the span of $d\zeta(V)$ is trivially the 1-dimensional space $L=\real=\LieA$. Thus the condition of Theorem \ref{thm:rescale_G} is met and $V$ can be rescaled to make $\zeta$ a Koopman eigenfunction. This completes the proof of Corollary \ref{corr:rescale:S1}. \qed

\subsection{Proof of Theorem~\ref{thm:submersion}} \label{sec:proof:submersion}

We begin with a lemma that establishes that each non-constant eigenfunction is a submersion.
\begin{lemma}\label{lem:Koop_eigen_subm}
Let $z:M\to S^1$ be a $C^1$ Koopman eigenfunction with eigenfrequency $\omega\neq 0$. Then $z$ is a submersion and its fibres are therefore, codimension-1 submanifolds.
\end{lemma}
\begin{proof} By the Koopman eigenvalue equation \eqref{eqn:def:koop_eigen}, $V(z) = i\omega z$. For any choice of a Riemannian metric $\tau$, 
\[ \omega = dz(V)(x) = \proj_2 z_*(V)(x) = \langle \nabla_\tau z(x), V(x) \rangle_\tau, \quad \forall x\in M.  \]
This implies that $z_*(V)$ is non-zero everywhere on $M$. Thus $z_*$ must be a submersion.
\end{proof}

The proof of Theorem \ref{thm:submersion} will be by contradiction, so suppose that $\pi$ is not a submersion. By Lemma \ref{lem:Koop_eigen_subm}, every Koopman eigenfunction with nonzero eigenfrequency is a submersion, so there is a non-empty maximal subset of \{$z_1,\ldots,z_m$\} which forms a submersion. By renumbering the eigenfunctions, we can assume without loss of generality that this set is $z_1,\ldots,z_k$. So $\pi^{(k)}$ := $(z_1,\ldots,z_k)$ is a submersion, but $\pi^{(k+1)}$ := $(z_1,\ldots,z_{k+1})$ is not. Let $x\in M$ be a singular point for $\pi^{(k+1)}$, so $D\pi^{(k+1)}(x) : T_xM\to T_{y}\TorusD{k+1}$ has rank k, where y:= $\pi^{(k+1)}(x)$. Let $F^{(k)}$ be the foliation by $\pi^{(k)}$. Let $W_k$ be the bundle spanned by the vectors $\nabla_{\metric} z_1, \ldots, \nabla_{\metric} z_k$. Note that $TF^{(k)}$ = $W_k^\bot$ = $\ker\pi^{(k)}$. 

Let $F_{k+1}$ be the foliation induced by $z_{k+1}$. Now note that $D\pi^{(k+1)}(x)$ has rank $k$ iff $\nabla_{\metric} z_{k+1}(x) \in W_k(x)$. In other words
\begin{equation} \label{eqn:sdm38}
    D\pi^{(k+1)}(x) = x \;\Leftrightarrow\; T_{x} F^{(k)} \subset T_{x} F_{k+1}. 
\end{equation}
%
By the invariance of both the foliations $F_{k+1}$ and $F^{(k)}$ under the flow $\Phi^t$, for every $t\in\real$, 
\[ T_{\Phi^t x} F^{(k)} \subset T_{\Phi^t x} F_{k+1}, \quad \forall t\in\real .  \]
%
Thus by \eqref{eqn:sdm38} $\Phi^t x$ is also a singular point for $\pi^{(k+1)}$, for every $t\in\real$. 
Therefore, for every $t\in\real$, $\pi(\Phi^t x)$ = $y+\vec{\omega}t$ is a singular value of the map $\pi$, where $\vec{\omega}$ = $(\omega_1, \ldots, \omega_{k+1})$. However, since the components of $\vec{\omega}$ are rationally independent, the values \{ $y+\vec{\omega}t$ : $t\in\real$ \} form a dense full-measure subset of $\TorusD{k+1}$. This violates Sard's theorem, and completes the proof of Theorem~\ref{thm:submersion}. \qed

\subsection{Proof of Theorem~\ref{thm:local_lift}} \label{sec:proof:local_lift} 

To prove that $\tilde{d}\theta = d(z|u)$, it is equivalent to prove the commutation diagram below. 
\[\begin{tikzcd}[column sep = large]
TU_g \arrow{r}[swap]{\IsoA} & U_g\times\LieA \arrow{r}[swap]{\proj_2} & \LieA \\
TU_x \arrow{u}{z_*} \arrow{r}{\theta_*} &TU_0 \arrow{r}{\IsoB} & U_0\times\LieA \arrow{u}[swap]{\proj_2} 
\end{tikzcd}.\]
In this diagram, the clockwise path from $TU_x$ to $\LieA$ is $d(z|u)$ and the counter-clockwise path is $\tilde{d}\theta$. We will in fact prove the stronger commutation
\[\begin{tikzcd}[column sep = large]
TU_g \arrow{r}[swap]{\IsoA} & U_g\times\LieA \arrow{r}[swap]{\proj_2} & \LieA \\
TU_x \arrow{u}{z_*} \arrow{r}{\theta_*} &TU_0 \arrow{u}{\IsoB} 
\end{tikzcd}.\]
 To prove this, we include the map $E_{g*}:TU_0 \to TU_g$ in the diagram and split the figure into two separate commuting diagrams.
\[
\begin{tikzcd}
TU_g &\ \\
TU_x \arrow{u}{z_*} \arrow{r}{\theta_*} &TU_0 \arrow{ul}[swap]{E_{g*}}
\end{tikzcd}; \quad
\begin{tikzcd}[column sep = large]
TU_g \arrow{r}[swap]{\IsoA} & U_g\times\LieA \\
\ &TU_0 \arrow{ul}{E_{g*}} \arrow{u}{\IsoB} 
\end{tikzcd}.
\]
The first diagram is a direct consequence of the composition relation $z = E_g\circ\theta$. To verify the second diagram, fix $u\in U_0$ and $w\in T_u U_0$. Then,
\[\begin{split} 
\IsoA\circ E_{g*}(u,w) &= \IsoA\circ \left( \Trans{g}_*|_{\exp(u)} \exp_*|_u w \right) \quad \mbox{by \eqref{eqn:def:lift}} , \\
&= \Trans{\left[g\exp(u)\right]^{-1}}_*|_{g\exp(u)} \Trans{g}_*|_{\exp(u)} \exp_*|_u w \quad \mbox{by \eqref{eqn:Lie_bundle}} , \\
&= \Trans{\exp(u)^{-1}}_*|_{\exp(u)} \Trans{g^{-1}}_*|_{g\exp(u)} \Trans{g}_*|_{\exp(u)} \exp_*|_u w , \\
&= \Trans{\exp(u)^{-1}}_*|_{\exp(u)} \exp_*|_u w = \IsoB(u,w) \quad \mbox{by \eqref{eqn:def:Iso2}} . \\
\end{split}\]
This proves the commutation in the second diagram and the first part of the claim is proved. 

The other two claims assume that $\LieG$ is Abelian. If $\LieG$ is Abelian, then $\exp(u+v) = \exp(u)\exp(v)$ for every $u,v\in \LieA$. Using this fact, is well known \citep[e.g.][Sec A.6]{knapp2001representation} that $\LieG$ is isomorphic to $\real^k \times \TorusD{d-k}$ for some $0 \leq k\leq d$. For these spaces, the exponential map linearly wraps the tangent space $\LieA$ at $e\in G$ around $\LieG$, making the $\IsoB$ the same as $\IsoA_{\text{can}}$. We will show how this holds more precisely. 

Note that the first component of $\IsoB$, namely $\proj_1 \IsoB$, is the identity map. So it remains to be shown that $\proj_2 \IsoB$ is the identity map on each fiber. Since $\exp(u+v) = \exp(v)\exp(u)$ for every $u,v\in U_0$, the induced tangent bundle maps from $T_u U_0 \to T_{\exp(u+v)} \LieG$ must be the same, i.e.
\[ \exp_*|_{u+v} = \Trans{\exp(v)}_* |_{\exp(u)} \exp_*|_u \]
Now take $v = -u$ to get
\[ Id = \exp_*|_{0} = \exp_*|_{u-u} = \Trans{\exp(-u)}_* |_{\exp(u)} \exp_*|_u = \Trans{\exp(u)^{-1}}_* |_{\exp(u)} \exp_*|_u = \proj_2 \IsoB. \]
Thus $\IsoB=\IsoA_{\text{can}}$, then $d(z|U) = \tilde{d}\theta = d\theta$, and the theorem is proved. However, we will show separately that in the Abelian case, $d(z|U) = d\theta$, to provide more intuition about these maps.

First, note that the neighborhood $U$ and map $\theta$ depend on the choice of the point $x$. Since $d(z|U)$ is independent of the particular choice of $x$, we will have to show that $d\theta$ evaluated at any point in $U$ is independent of the choice of $x$. To make the dependency on $x$ clearer, $U$ and $\theta$ will henceforth be denoted as $U_x$ and $\theta_x$ respectively.
So let $x'\neq x$ be such that $U_x \cap U_{x'} \neq \emptyset$ . Let $g=z(x)$, $g' = z(x')$. Thus
\[ g \exp \theta_{x} = z|U_{x} , \quad g' \exp \theta_{x'} = z|U_{x'}\]
Since these maps coincide, for every $y\in U_x \cap U_{x'}$,
\[  z(y) = g \exp \theta_{x}(y) = g' \exp \theta_{x'}(y) \quad \Leftrightarrow \quad g^{-1} g' = \exp\left[ \theta_{x'}(y) - \theta_{x}(y) \right] \]
Since $U_x \cap U_{x'}$ is a non-empty open set, and the left-hand side of the second identity above is independent of $y\in U_x \cap U_{x'}$, the Jacobian of the right hand side must be zero. Since by construction, $\theta_{x}$, $\theta_{x'}$ only takes values in $U_0$, and $\exp$ is a local diffeomorphism on $U_0$, 
\[ D \exp\left[ \theta_{x'}(y) - \theta_{x}(y) \right] = 0 \quad \Leftrightarrow \quad D\theta_{x'}(y) = D\theta_{x}(y) . \]
Finally note that since $\theta_x$ and $\theta_{x'}$ are $\LieA$ valued, the exterior derivatives and the Jacobians coincide. Thus,
\[d\theta_{x'}(y) = D\theta_{x'}(y) = D\theta_{x}(y) = d\theta_{x}(y) . \]
This completes the proof of Theorem~\ref{thm:local_lift} \qed

\paragraph{Acknowledgements} This work supported by a grant from the Office of Naval Research, YIP grant N00014-16-1-2649. This work is a part of a project supervised by Dr. Dimitris Giannakis, at the Courant Institute of Technology, New York University, New York. The author is also grateful to the referee for their insightful comments on improving the presentation of the paper.

\bibliographystyle{unsrt_inline_url}
\bibliography{References,Lie_group_ref}
\end{document}